\definecolor{darkred}{rgb}{0.5,0.2,0.2}
\titleformat{\subsubsection}[runin]
  {\normalfont\normalsize\bfseries}{\thesubsubsection}{1em}{}
\def\@endtheorem{\endtrivlist}
\newcommand{\borelSetsOf}[1]{\mathcal{B}(#1)} 
\newcommand{\realLine}{\mathbb{R}}  
\newcommand{\realLineNonNegative}{\mathbb{R}_{\geq 0}} 
\newcommand{\realLinePositive}{\mathbb{R}_{>0}} 
\newcommand{\integers}{\mathbb{N}} 
\newcommand{\anyCSMS}{\mathcal{X}}  
\newcommand{\anyCSMSBorel}{\borelSetsOf{\anyCSMS}}  
\newcommand{\boundedlyFiniteMeasures}[1]{\mathcal{N}^{\#}_{#1}} 
\newcommand{\boundedlyFiniteMeasuresBorel}[1]{\mathcal{B}(\boundedlyFiniteMeasures{#1})} 
\newcommand{\weakHash}{w^{\#}} 
\newcommand{\weakHashDistance}[2]{d^{\#}(#1,#2)} 
\newcommand{\weakHashDistancePlain}{d^{\#}} 
\newcommand{\prohorovDistance}[2]{d(#1,#2)} 
\newcommand{\prohorovDistancePlain}{d} 
\newcommand{\collection}[2]{(#1)_{#2}} 
\begin{document}

\thispagestyle{firstpg}
\begin{center}
\large \textbf{ON THE WEAK-HASH METRIC \\ FOR BOUNDEDLY FINITE INTEGER-VALUED MEASURES} \\
	\vspace{1cm}
	\large  Maxime Morariu-Patrichi\footnote{Department of Mathematics, Imperial College London, South Kensington Campus, London SW7 2AZ, UK. E-mail: \href{mailto:m.morariu-patrichi14@imperial.ac.uk}{\texttt{m.morariu-patrichi14@imperial.ac.uk}} URL: \url{http://www.maximemorariu.com}}  \\
	\vspace{0.5cm}
	\normalsize {\mydate\today}
	\vspace{1cm}
\end{center}

\begin{abstract}
	It is known that the space of boundedly finite integer-valued measures on a complete separable metric space becomes itself a complete separable metric space when endowed with the weak-hash metric. It is also known that convergence under this topology can be characterised in a way that is similar to the weak convergence of totally finite measures. However, the original proofs of these two fundamental results assume that a certain term is monotonic, which is not the case as we give a counterexample. We manage to clarify these original proofs by addressing specifically the parts that rely on this assumption and finding alternative arguments.
\end{abstract}

\bigskip
\noindent \textbf{Keywords:} boundedly finite integer-valued measures; weak-hash metric; completeness; separability; Borel sigma-algebra characterisation; convergence characterisation.
\bigskip

\noindent \textbf{2010 Mathematics Subject Classification:} primary 28A33, secondary 60G55.
\bigskip

\numberwithin{equation}{section}

\section{Introduction} \label{subsec:metric_properties_of_measure_space}

Let $\anyCSMS$ be a complete separable metric space and $x_{0}\in\anyCSMS$ be a fixed origin. We denote by $B_{r}(x)$ the open ball with radius $r\in\realLineNonNegative$ and centre $x\in\anyCSMS$. We use the short notation $B_r := B_r(x_0)$ for the open balls centred at $x_0$. For any subset $A\subset \anyCSMS$ and $\varepsilon\in\realLinePositive$, the $\varepsilon$-neighbourhood of $A$ is defined by $A^\varepsilon:=\bigcup_{a\in A} B_\varepsilon(a)$, the boundary of $A$ is denoted by $\partial A$ and the closure of $A$ is denoted by $\overline{A}$. For any Borel measure $\xi$ on $\anyCSMS$ and any $r\in\realLineNonNegative$, we use the notation $\xi^{(r)}$ to refer to the restriction of $\xi$ to the open ball $B_{r}$, that is $\xi^{(r)}(A) = \xi(A\cap B_{r})$ for all $A\in\anyCSMSBorel$. A Borel measure $\xi$ on $\anyCSMS$ is called totally finite if $\xi(\anyCSMS)<\infty$. We denote by $\mathcal{M}_\anyCSMS$ the space of totally finite measures on $\anyCSMS$ and by $\prohorovDistancePlain$ the Prohorov distance on $\mathcal{M}_\anyCSMS$ defined by
\begin{alignat*}{2}
	d\,: \mathcal{M}_\anyCSMS\times\mathcal{M}_\anyCSMS &\rightarrow\realLineNonNegative && \\
	(\mu,\nu)& \mapsto d(\mu,\nu):=\inf \{\varepsilon\in\realLineNonNegative \,:\, && \mu(A) \leq \nu(A^\varepsilon)+\varepsilon \mbox{ and } \nu(A) \leq \mu(A^\varepsilon)+\varepsilon, \\
	& &&\mbox{for all closed } A\subset\anyCSMS \}.
\end{alignat*}
 It is known that $\prohorovDistancePlain$ makes $\mathcal{M}_\anyCSMS$ a complete separable metric space, see for example Section A2.5 in \citet[p.~398--402]{daleyVereJonesVolume1}.

In this paper, we are interested in boundedly finite integer-valued measures. A Borel measure $\xi$ on $\anyCSMS$ is called boundedly finite if $\xi(A)<\infty$ for all bounded Borel sets $A\in\borelSetsOf{\anyCSMS}$. We denote by $\boundedlyFiniteMeasures{\anyCSMS}$ the space of boundedly finite measures on $\anyCSMS$ with values in $\integers\cup\{\infty\}$. 
Note that such measures are always atomic (i.e., a superposition of Dirac measures), see for example Proposition 9.1.III.(ii) in \citet[p.~4]{daleyVereJonesVolume2}.
One might ask if the Prohorov distance $\prohorovDistancePlain$ on the space $\mathcal{M}_\anyCSMS$ has a counterpart on the space $\boundedlyFiniteMeasures{\anyCSMS}$. \citet[p.~403]{daleyVereJonesVolume1} tackle this question by considering the distance function
\begin{align} \label{eq:weak_hash_distance_definition}
	d^{\#}\,: \boundedlyFiniteMeasures{\anyCSMS}\times\boundedlyFiniteMeasures{\anyCSMS}&\rightarrow\realLineNonNegative \nonumber \\
	(\mu,\nu)&\mapsto\weakHashDistance{\mu}{\nu}:=\int_{0}^{\infty}e^{-r}\frac{d(\mu^{(r)},\nu^{(r)})}{1+d(\mu^{(r)},\nu^{(r)})}dr.
\end{align}
The core idea is to use the Prohorov metric on the restrictions to the open balls and compute a weighted average. They name the corresponding topology the $\weakHash$-topology (``weak-hash'') and refer to $\weakHashDistancePlain$ as the $\weakHash$-distance. They then obtain the following two fundamental results. The first one is a characterisation of convergence under this metric.
\newtheorem{weakHash_implies_weak_on_spheres2}[]{Theorem}[section]
\begin{weakHash_implies_weak_on_spheres2}[Characterisation of convergence] \label{thm:weakHash_implies_weak_on_spheres2}
Let $\collection{\mu_{k}}{k\in\integers}$ be a sequence in $\boundedlyFiniteMeasures{\anyCSMS}$ and $\mu\in\boundedlyFiniteMeasures{\anyCSMS}$. Then, the following statements are equivalent:
\begin{enumerate}[label=\textup{(\roman*)}]
	\item 	$\weakHashDistance{\mu_{k}}{\mu}\rightarrow 0$ as $k\rightarrow\infty$;
	\item $\int_\anyCSMS f(x)\mu_k(dx)\rightarrow \int_\anyCSMS f(x)\mu(dx)$ as $k\rightarrow\infty$ for all bounded continuous functions $f$ on $\anyCSMS$ vanishing outside a bounded set;
	\item there exists an increasing sequence $\collection{r_{n}}{n\in\integers}$ with $r_{n}\rightarrow\infty$ as $n\rightarrow\infty$ such that \\
	$\prohorovDistance{\mu^{(r_{n})}_{k}}{\mu^{(r_{n})}}\rightarrow 0$ as $k\rightarrow\infty$ for all $n\in\integers$;
	\item $\mu_k(A)\rightarrow\mu(A)$ as $k\rightarrow\infty$ for all bounded sets $A\in\anyCSMSBorel$ such that $\mu(\partial A)=0$.
\end{enumerate} 
\end{weakHash_implies_weak_on_spheres2}
The second one confirms that $\weakHashDistancePlain$ is indeed the counterpart of $\prohorovDistancePlain$, that is $\boundedlyFiniteMeasures{\anyCSMS}$ inherits the completeness and separability properties of $\anyCSMS$ under the metric $\weakHashDistancePlain$. This second result also provides us with a characterisation of the Borel $\sigma$-algebra $\borelSetsOf{\boundedlyFiniteMeasures{\anyCSMS}}$.
\newtheorem{mppSpace_is_CSMS}[weakHash_implies_weak_on_spheres2]{Theorem}
\begin{mppSpace_is_CSMS}[Metric properties of $\boundedlyFiniteMeasures{\anyCSMS}$] \label{thm:mppSpace_is_CSMS}
\hspace{1cm}
\begin{enumerate}[label=\textup{(\roman*)}]
\item The space $\boundedlyFiniteMeasures{\anyCSMS}$ is a complete separable metric space when it is equipped with the distance function $\weakHashDistancePlain$.
\item The corresponding Borel $\sigma$-algebra $\borelSetsOf{\boundedlyFiniteMeasures{\anyCSMS}}$ is the smallest $\sigma$-algebra that makes all mappings $\Phi_A:\boundedlyFiniteMeasures{\anyCSMS} \rightarrow \integers\cup\{\infty\}$, $A\in\anyCSMSBorel$, measurable, where $\Phi_A(\xi)=\xi(A)$.
\end{enumerate}
\end{mppSpace_is_CSMS}
Theorem \ref{thm:weakHash_implies_weak_on_spheres2} and Theorem \ref{thm:mppSpace_is_CSMS} in this paper are Proposition A2.6.II and Theorem A2.6.III in \citet[p.~403--405]{daleyVereJonesVolume1}, respectively.

Regarding the motivation of this article,
the metric space $(\boundedlyFiniteMeasures{\anyCSMS},\weakHashDistancePlain)$ is a stepping stone to the theory of point processes as presented  by \citet{daleyVereJonesVolume2}, who define a point process as a random element in $\boundedlyFiniteMeasures{\anyCSMS}$.
The present research was in fact triggered by the work of \citet{morariu:2017:hybrid} who study the existence and uniqueness of marked point processes defined via their intensity. Since the above theorems are crucial in their framework and proofs, the present author examined them carefully, which resulted in this article.

We now turn to the precise purpose of this paper. To argue that the integrand in \eqref{eq:weak_hash_distance_definition} is measurable and prove the  above properties of the metric $\weakHashDistancePlain$, \citet[p.~403--405]{daleyVereJonesVolume1} assume that $d(\mu^{(r)},\nu^{(r)})$ is non-decreasing as a function of $r\in\realLineNonNegative$. However, this does not seem true as suggested by the following counterexample.
\theoremstyle{definition}
\newtheorem{counterexample}[weakHash_implies_weak_on_spheres2]{Example}
\begin{counterexample}
	Set $\anyCSMS =\realLine$, $x_0=0$, $\mu=\delta_{0}$ and $\nu=\delta_{0.5}$, where, for any $x\in\anyCSMS$, $\delta_x$ denotes the Dirac measure at $x$. Then, as long as $r<0.5$, $d(\mu^{(r)},\nu^{(r)})=1$. However, as soon as $r>0.5$, $d(\mu^{(r)},\nu^{(r)})=0.5$. 
\end{counterexample}
\theoremstyle{plain}
Consequently, our goal is to clarify the original proofs of Theorems \ref{thm:weakHash_implies_weak_on_spheres2} and \ref{thm:mppSpace_is_CSMS} given in \citet{daleyVereJonesVolume1} by addressing specifically the parts that rely on the assumed monotonicity of $d(\mu^{(r)},\nu^{(r)})$. Note that \citet{daleyVereJonesVolume1} consider the larger space $\mathcal{M}_{\anyCSMS}^{\#}$ of boundedly finite measures, i.e., not necessarily integer-valued. The proofs we develop here (except in Section \ref{sec:metric_well_defined}) are specialised to the subspace $\boundedlyFiniteMeasures{\anyCSMS}$ and take advantage of the discrete nature of its elements. Besides, we should add that an alternative metrization of $\mathcal{M}_{\anyCSMS}^{\#}$, leading to the same properties, is presented in \citet[Section 4.1, p.~111--117]{kallenberg2017random}. According to \citet[Historical and bibliographical notes, p.~638]{kallenberg2017random}, this extension from totally finite measures to boundedly finite measures under this alternative metric was first developed by \citet{matthes:1974:inifinitelyDivisiblePPs}.

The paper is organised as follows. Section \ref{sec:preliminaries} gives some preliminary results on the Prohorov metric. Section \ref{sec:metric_well_defined} shows that the distance function in \eqref{eq:weak_hash_distance_definition} is well-defined. Section \ref{sec:convergence} deals with the proof of Theorem \ref{thm:weakHash_implies_weak_on_spheres2}. Sections \ref{sec:completeness_separability} and \ref{sec:sigma_algebra_characterisation} address the proof of Theorem \ref{thm:mppSpace_is_CSMS}.
\theoremstyle{definition}
\newtheorem{how_to_read}[weakHash_implies_weak_on_spheres2]{Remark}
\begin{how_to_read}
	We would like to stress that this paper focuses on the parts of the original proofs that assume that $r\mapsto d(\mu^{(r)},\nu^{(r)})$ is non-decreasing (with the exception of Section \ref{sec:sigma_algebra_characterisation}). Our main objective is to find alternative arguments for these parts specifically. To understand the proofs of Theorems \ref{thm:weakHash_implies_weak_on_spheres2} and \ref{thm:mppSpace_is_CSMS} in their entirety and the details of the other parts that are not treated here, we refer the reader to the original text  \citep[p.~403--405]{daleyVereJonesVolume1}.
\end{how_to_read}
\theoremstyle{plain}

\section{Preliminaries on the Prohorov metric} \label{sec:preliminaries}

As the Prohorov metric $\prohorovDistancePlain$ is the main building block of the $\weakHash$-distance $\weakHashDistancePlain$, it is not surprising that we need to study its behaviour. In particular, we will apply the following lemmas.
\newtheorem{bound_prohorov}[weakHash_implies_weak_on_spheres2]{Lemma}
\begin{bound_prohorov}[] \label{lem:bound_prohorov}
Let $\mu\in\mathcal{M}_{\mathcal{X}}^{\#}$ and $p,r\in\realLineNonNegative$ such that $p\leq r$. Then $\prohorovDistance{\mu^{(p)}}{\mu^{(r)}}	\leq \mu(S_{r}\setminus S_{p})$.
\end{bound_prohorov}
\begin{proof}
	Let $\varepsilon > \mu(S_{r}\setminus S_{p})$. Let $F\in\borelSetsOf{\anyCSMS}$ be a closed set. Then, clearly
	\begin{equation*}
		\mu^{(p)}(F)=\mu(F\cap S_{p}) \leq \mu (F^{\varepsilon}\cap S_{r}) + \varepsilon = \mu^{(r)}(F^{\varepsilon}) + \varepsilon.
	\end{equation*}
	Moreover, we have that
	\begin{align*}
		\mu^{(r)}(F) &= \mu(F\cap S_{p}) + \mu(F\cap S_{r}\setminus S_{p}) \\
		&\leq  \mu^{(p)}(F) + \mu(S_{r}\setminus S_{p}) \\
		&\leq \mu^{(p)}(F^\varepsilon) + \varepsilon.
	\end{align*}
	This means exactly that $\prohorovDistance{\mu^{(p)}}{\mu^{(r)}}	\leq \mu(S_{r}\setminus S_{p})$ by definition of the Prohorov distance $\prohorovDistancePlain$.
\end{proof}
\newtheorem{bound_prohorov_lower_point}[weakHash_implies_weak_on_spheres2]{Lemma}
\begin{bound_prohorov_lower_point}[] \label{lem:bound_prohorov_lower_point}
Let $\mu,\nu\in\boundedlyFiniteMeasures{\anyCSMS}$ such that $\mu(\anyCSMS)<\infty$, $\nu(\anyCSMS)<\infty$. Let $\underbar{r},\bar{r},\varepsilon\in\realLinePositive$ such that $\underbar{r}<\bar{r}$ and $\varepsilon<(\bar{r}-\underline{r})/2<1$. If $\mu(B_{\bar{r}}\setminus B_{\underline{r}})= 0$ and $\nu(B_{\bar{r}-\varepsilon} \setminus B_{\underline{r}+\varepsilon})>0$, then $d(\mu,\nu) \geq \varepsilon$.
\end{bound_prohorov_lower_point}
\begin{proof}
	Let $0\leq \delta <\varepsilon$ and $u\in B_{\bar{r}-\varepsilon}\setminus B_{\underline{r}+\varepsilon}$ such that $\nu(\{u\})\geq 1$. Then, we have that
	\begin{equation*}
		\nu(\{u\}) \geq 1 >  \delta = \mu(\{u\}^{\delta}) + \delta,
	\end{equation*}
	which implies that $d(\mu,\nu) \geq \delta$ by definition of the Prohorov distance. As a consequence, we have that $d(\mu,\nu)\geq \varepsilon$.
\end{proof}
\newtheorem{bound_prohorov_lower}[weakHash_implies_weak_on_spheres2]{Lemma}
\begin{bound_prohorov_lower}[] \label{lem:bound_prohorov_lower}
Let $r\in\realLineNonNegative$ and $\mu,\nu\in\boundedlyFiniteMeasures{\anyCSMS}$. Then $\prohorovDistance{\mu^{(r)}}{\nu^{(r)}}	\geq |\mu(B_{r}) - \nu(B_{r})|$.
\end{bound_prohorov_lower}
\begin{proof}
	Without loss of generality, we can assume that $\mu(B_{r})> \nu(B_{r})$. Let $\varepsilon\in [0,\mu(B_{r}) -\nu(B_{r}))$ and let $\delta \in [0, \mu(B_{r}) -\nu(B_{r}) - \varepsilon)$. By Proposition A2.2.II in \citet[p.~386]{daleyVereJonesVolume1}, there exists a closed set $F\subset B_{r}$ such that
	$\mu^{(r)}(B_{r}\setminus F)<\delta$.
	Then, we have that
	\begin{align*}
		\mu^{(r)}(F) &= \mu^{(r)}(B_{r}) - \mu^{(r)}(B_{r}\setminus F) > \mu^{(r)}(B_{r}) - \delta \\
		&> \mu^{(r)}(B_{r}) + \varepsilon + \nu(B_{r}) -\mu(B_{r}) \geq \nu^{(r)}(F^{\varepsilon}) + \varepsilon.
	\end{align*}
	Again, this implies that $\prohorovDistance{\mu^{(r)}}{\nu^{(r)}}	\geq |\mu(B_{r}) - \nu(B_{r})|$ by definition of the Prohorov distance.
\end{proof}

\section{The metric $\weakHashDistancePlain$ is well-defined} \label{sec:metric_well_defined}

In this section, we address the proof in \citet[p.~403]{daleyVereJonesVolume1} that shows that $\weakHashDistancePlain$ is indeed a well-defined metric.
We have to check that the integral in \eqref{eq:weak_hash_distance_definition} is well-defined and, in particular, that $r\mapsto d(\mu^{(r)},\nu^{(r)})$ is measurable. To achieve this, it suffices to notice that this function is actually piecewise constant since $\mu$ and $\nu$ are atomic with finitely many atoms in any bounded set. In fact, for any $R\in\realLinePositive$, as $r$ goes from $0$ to $R$, the restricted measures $\mu^{(r)}$ and $\nu^{(r)}$ change only a finite number of times and so does $d(\mu^{(r)},\nu^{(r)})$. The other arguments in \citet[p.~403]{daleyVereJonesVolume1} are then enough to obtain that $\weakHashDistancePlain$ satisfies all the conditions of a distance function.

As a side note, for the general case where $\mu,\nu\in\mathcal{M}_{\anyCSMS}^{\#}$, we can prove that $r\mapsto d(\mu^{(r)},\nu^{(r)})$ is  measurable by showing that it is of finite variation.
\newtheorem{prohorov_finite_variation}[weakHash_implies_weak_on_spheres2]{Proposition}
\begin{prohorov_finite_variation}[] \label{lem:prohorov_finite_variation}
Let $\mu,\nu\in\mathcal{M}_{\mathcal{X}}^{\#}$ and $R\in\realLineNonNegative$. Then, as a function of $r\in\realLineNonNegative$, the variation of $\prohorovDistance{\mu^{(r)}}{\nu^{(r)}}$ over $[0,R]$ is bounded by $\mu(S_{R}) + \nu(S_{R})$. In particular, $r\mapsto   \prohorovDistance{\mu^{(r)}}{\nu^{(r)}}$ is of bounded variation and, thus, measurable.
\end{prohorov_finite_variation}
\begin{proof}
	Let $r\in\realLineNonNegative$ and $\delta>0$. Applying the triangle inequality to the Prohorov distance, we obtain the following two inequalities:
	\begin{align*}
		\prohorovDistance{\mu^{(r+\delta)}}{\nu^{(r+\delta)}} &\leq \prohorovDistance{\mu^{(r+\delta)}}{\mu^{(r)}} + \prohorovDistance{\mu^{(r)}}{\nu^{(r)}} + \prohorovDistance{\nu^{(r)}}{\nu^{(r+\delta)}}\,; \\
		\prohorovDistance{\mu^{(r)}}{\nu^{(r)}} &\leq \prohorovDistance{\mu^{(r)}}{\mu^{(r+\delta)}} + \prohorovDistance{\mu^{(r+\delta)}}{\nu^{(r+\delta)}} + \prohorovDistance{\nu^{(r+\delta)}}{\nu^{(r)}}.
	\end{align*}
	This implies that
	\begin{equation*}
		|\prohorovDistance{\mu^{(r+\delta)}}{\nu^{(r+\delta)}} - \prohorovDistance{\mu^{(r)}}{\nu^{(r)}}| \leq \prohorovDistance{\mu^{(r)}}{\mu^{(r+\delta)}} + \prohorovDistance{\nu^{(r)}}{\nu^{(r+\delta)}}.
	\end{equation*}
	Using Lemma \ref{lem:bound_prohorov}, we can go further and get that
	\begin{equation*}
		|\prohorovDistance{\mu^{(r+\delta)}}{\nu^{(r+\delta)}} - \prohorovDistance{\mu^{(r)}}{\nu^{(r)}}| \leq \mu(S_{r+\delta}) - \mu(S_{r}) + \nu(S_{r+\delta}) - \nu(S_{r}).
	\end{equation*}
	Since $\mu(S_{r})$ and $\nu(S_{r})$ are non-decreasing in $r$ and always finite (because $\mu$ and $\nu$ are boundedly finite), they are of bounded variation, which concludes the proof.
\end{proof}

\section{Characterisation of convergence in the $\weakHash$-topology} \label{sec:convergence}

In this section, we address the proof of Theorem \ref{thm:weakHash_implies_weak_on_spheres2}, which characterises the convergence of boundedly finite integer-valued measures.
\begin{proof}[Proof of Theorem \ref{thm:weakHash_implies_weak_on_spheres2}]
We only need to show the implication (i)$\implies$(iii) as this seems to be the only part in \citet[p.~403]{daleyVereJonesVolume1} assuming that $d(\mu^{(r)},\nu^{(r)})$ is non-decreasing in $r\in\realLineNonNegative$. The rest of the proof of Proposition A2.6.II in \citet[p.~403-404]{daleyVereJonesVolume1} can be used to show that (iii)$\implies$(ii)$\implies$(iv)$\implies$(i).

Let $n\in\integers$ and $\underline{r}_n, \bar{r}_n\in\realLineNonNegative$ such that $n<\underline{r}_n < \bar{r}_n < n+1$ and $\mu(B_{\bar{r}_n}\setminus B_{\underline{r}_n})=0$. Let $0<\varepsilon<(\bar{r}_n-\underline{r}_n)/2$. By contradiction, assume that for any $K\in\integers$, there exists $k>K$ such that $\mu_{k}(B_{\bar{r}_n-\varepsilon}\setminus B_{\underline{r}_n+\varepsilon} )\geq 1$. Then, by Lemma \ref{lem:bound_prohorov_lower_point}, there exists a subsequence $\collection{k_{p}}{p\in\integers}$ such that $\prohorovDistance{\mu^{(r)}_{k_p}}{\mu^{(r)}}\geq \varepsilon$ for all $r\geq n+1$, $p\in\integers$. Thus, along this subsequence, we must have that
\begin{equation*}
	\weakHashDistance{\mu_{k_p}}{\mu} = \int_{0}^{\infty}e^{-r}\frac{d(\mu^{(r)}_{k_p},\mu^{(r)})}{1+d(\mu^{(r)}_{k_p},\mu^{(r)})}dr \geq \int_{n+1}^{\infty}e^{-r}\frac{\varepsilon}{1+\varepsilon}dr = \frac{\varepsilon}{1+\varepsilon}e^{-n-1}>0,
\end{equation*}
which contradicts the assumption that $\weakHashDistance{\mu_{k}}{\mu}\rightarrow 0$ as $k\rightarrow\infty$. As a consequence, there exists a $K\in\integers$ such that, for all $k\geq K$, $\mu_{k}(S_{\bar{r}_n-\varepsilon}\setminus S_{\underline{r}_n+\varepsilon} )=0$. This means that, for all $k\geq K$, both $\mu_k$ and $\mu$ do not have atoms in $S_{\bar{r}_n-\varepsilon}\setminus S_{\underline{r}_n+\varepsilon}$, whence there is some constant $d_{k}\in\realLineNonNegative$ such that $\prohorovDistance{\mu^{(r)}_k}{\mu^{(r)}}=d_{k}$ for all $r\in(\underline{r}_n+\varepsilon,\bar{r}_n-\varepsilon)$. This implies that, for all $k\geq K$,
\begin{equation*}
	\weakHashDistance{\mu_{k}}{\mu} = \int_{0}^{\infty}e^{-r}\frac{d(\mu^{(r)}_{k},\mu^{(r)})}{1+d(\mu^{(r)}_{k},\mu^{(r)})}dr \geq \int_{\underline{r}_n+\varepsilon}^{\bar{r}_n-\varepsilon}e^{-r}\frac{d_{k}}{1+d_{k}}dr \geq \frac{d_{k}}{1+d_{k}}e^{-\underline{r}_n-\varepsilon}(1-e^{-(\bar{r}_n - \underline{r}_n -2\varepsilon )}),
\end{equation*}
and, thus, $d_{k}\rightarrow 0$ as $k\rightarrow\infty$. If we set $r_{n}=(\underline{r}_n + \bar{r}_n)/2$, we finally have that $\prohorovDistance{\mu^{(r_n)}_k}{\mu^{(r_n)}}\rightarrow 0$ as $k\rightarrow\infty$.
\end{proof}

\section{Completeness and separability of $\boundedlyFiniteMeasures{\anyCSMS}$} \label{sec:completeness_separability}

In this section, we address the proof of the first part of Theorem \ref{thm:mppSpace_is_CSMS}, which states that $\boundedlyFiniteMeasures{\anyCSMS}$ is complete and separable when it is endowed with the $\weakHash$-metric $\weakHashDistancePlain$.

\subsection{Completeness}

To begin with, we show that if a sequence $\collection{\mu_k}{k\in\integers}$ in $(\boundedlyFiniteMeasures{\anyCSMS}, \weakHashDistancePlain)$ is Cauchy, then the restrictions along an increasing sequence of balls are also Cauchy for the Prohorov metric $\prohorovDistancePlain$.
\newtheorem{cauchy_implies_cauchy}[weakHash_implies_weak_on_spheres2]{Proposition}
\begin{cauchy_implies_cauchy}[] \label{prop:cauchy_implies_cauchy}
Let $\collection{\mu_k}{k\in\integers}$	be a Cauchy sequence in $\boundedlyFiniteMeasures{\anyCSMS}$ for the $\weakHash$-metric $\weakHashDistancePlain$. Then, there exists an increasing sequence $\collection{r_n}{n\in\integers}$ in $\realLinePositive$ with $r_{n}\rightarrow\infty$ as $n\rightarrow\infty$ such that, for each $n\in\integers$, $\collection{\mu_k^{(r_n)}}{k\in\integers}$ is a Cauchy sequence in $\mathcal{M}_{\anyCSMS}$ for the Prohorov metric $\prohorovDistancePlain$.
\end{cauchy_implies_cauchy}
\begin{proof}
	\textbf{Step 1.} We show that $\mu_k(B_{r})$ is bounded in $k\in\integers$ for all $r\in\realLineNonNegative$. By contradiction, assume that this is not the case. Then, there exists a subsequence such that $\mu_{k_p}(B_{r})\rightarrow\infty$. Along this subsequence, for $p$ large enough and any fixed $q\in\integers$, we have that
	\begin{align*}
		\int_{r}^{r+1}e^{-s}\frac{\prohorovDistance{\mu_{k_p}^{(s)}}{\mu_{k_q}^{(s)}}}{1+\prohorovDistance{\mu_{k_p}^{(s)}}{\mu_{k_q}^{(s)}}}ds &\geq 
		\int_{r}^{r+1}e^{-s}\frac{|\mu_{k_p}(B_s) - \mu_{k_q}(B_{s})|}{1+|\mu_{k_p}(B_s) - \mu_{k_q}(B_{s})|}ds \\
		&\geq \int_{r}^{r+1}e^{-s}\frac{\mu_{k_p}(B_r) - \mu_{k_q}(B_{r+1})}{1+\mu_{k_p}(B_r) - \mu_{k_q}(B_{r+1})}ds \rightarrow e^{-r}(1-e^{-1}),\quad p\rightarrow\infty,
	\end{align*}
	where we used Lemma \ref{lem:bound_prohorov_lower} and the fact that $\mu_{k_p}(B_{s})$ and $\mu_{k_q}(B_{s})$ are non-decreasing in $s$. But this is incompatible with the Cauchy assumption on  $\collection{\mu_k}{k\in\integers}$. Indeed, let $\varepsilon<e^{-r}(1-e^{-1})$. Then, the Cauchy assumption implies that there exists $K\in\integers$ such that, for all $k,k'\geq K$,
	\begin{align*}
	\weakHashDistance{\mu_{k}}{\mu_{k'}} = \int_{0}^{\infty}e^{-s} \frac{\prohorovDistance{\mu_{k}^{(s)}}{\mu_{k'}^{(s)}}}{1+\prohorovDistance{\mu_{k}^{(s)}}{\mu_{k'}^{(s)}}}ds	\leq \varepsilon.	
	\end{align*}
	But then, for $p,q\in\integers$ large enough, we must have that
	\begin{align*}
		\varepsilon \geq \int_{0}^{\infty}e^{-s} \frac{\prohorovDistance{\mu_{k_p}^{(s)}}{\mu_{k_q}^{(s)}}}{1+\prohorovDistance{\mu_{k_p}^{(s)}}{\mu_{k_q}^{(s)}}}ds \geq \int_{r}^{r+1}e^{-s} \frac{\prohorovDistance{\mu_{k_p}^{(s)}}{\mu_{k_q}^{(s)}}}{1+\prohorovDistance{\mu_{k_p}^{(s)}}{\mu_{k_q}^{(s)}}}ds > \varepsilon.
	\end{align*}
	
	\textbf{Step 2.} Let $n\in\integers$. We show that for $k,p\in\integers$ large enough, there is a  subinterval of $[n,n+1]$ on which the functions $r\mapsto \prohorovDistance{\mu_k^{(r)}}{\mu_p^{(r)}}$ are constant.  Define $M:=\sup_{k\in\integers}\mu_{k}(B_{n+1})$, which is finite by the first step and can be understood as a bound on the number of points in the ball $B_{n+1}$ among all measures $\mu_{k}$. Let $\varepsilon_1, \varepsilon_2 \in\realLinePositive$ such that $\varepsilon_1 < \varepsilon_2 < 1/2(M+1)$ and $\varepsilon_1 < \varepsilon_2 e^{-n-1} / (1+\varepsilon_2)$. Let $K\in\integers$ such that, for all $k,k'\geq K$, $\weakHashDistance{\mu_k}{\mu_{k'}} \leq \varepsilon_1$ (Cauchy assumption). Since $\mu_{K}(B_{n+1}\setminus B_n) \leq M$, we can find $\underline{r}_n,\overline{r}_n \in (n,n+1)$ such that $\mu_K(B_{\overline{r}_n}\setminus B_{\underline{r}_n})=0$ and $\overline{r}_n-\underline{r}_n \geq 1/(M+1)$. Now, by contradiction, assume that for some $p>K$, we have $\mu_p(B_{\overline{r}_n-\varepsilon_2} \setminus B_{\underline{r}_n+\varepsilon_2} )\geq 1$. Then, using Lemma \ref{lem:bound_prohorov_lower_point}, we obtain that
	\begin{align*}
		\varepsilon_1 \geq \weakHashDistance{\mu_K}{\mu_p} = \int_{0}^{\infty}e^{-r} \frac{\prohorovDistance{\mu_{K}^{(r)}}{\mu_{p}^{(r)}}}{1+\prohorovDistance{\mu_{K}^{(r)}}{\mu_{p}^{(r)}}}dr \geq \int_{n+1}^{\infty}e^{-r} \frac{\prohorovDistance{\mu_{K}^{(r)}}{\mu_{p}^{(r)}}}{1+\prohorovDistance{\mu_{K}^{(r)}}{\mu_{p}^{(r)}}}dr \geq \frac{\varepsilon_2}{1+\varepsilon_2}e^{-n-1},
	\end{align*}
	which contradicts the original assumption on $\varepsilon_1$ and $\varepsilon_2$. As a consequence, for all $k\geq K$, we have that $\mu_k(B_{\overline{r}_n-\varepsilon_2}\setminus B_{\underline{r}_n+\varepsilon_2}) =0$, which implies that $r\mapsto \prohorovDistance{\mu_p^{(r)}}{\mu_q^{(r)}}$ is constant on $(\underline{r}_n+\varepsilon_2, \overline{r}_n-\varepsilon_2)$ for all $p,q\geq K$.
	
	\textbf{Step 3.} We finally show that when $r_n=:(\underline{r}_n + \overline{r}_n)/2$, $\collection{\mu_k^{(r_n)}}{k\in\integers}$ is a Cauchy sequence for the Prohorov metric $\prohorovDistancePlain$. Let $\varepsilon>0$ and set $\delta:=(\overline{r}_n-\underline{r}_n-2\varepsilon_2)e^{-n-1}\varepsilon/(1+\varepsilon)$.  Let $J\in\integers$ such that, for all $p,q\geq J$, $\weakHashDistance{\mu_k}{\mu_{k'}}\leq \delta$ (Cauchy assumption). Then, for all $p,q\geq K\vee J$, we must have that
	\begin{equation*}
		\delta \geq \int_{\underline{r}_n+\varepsilon_2}^{\overline{r}_n-\varepsilon_2}e^{-r}\frac{d_{pq}}{1+d_{pq}}dr \geq \frac{d_{pq}}{1+d_{pq}}(\overline{r}_n-\underline{r}_n-2\varepsilon_2)e^{-n-1},
	\end{equation*}
	where $d_{pq}:=\prohorovDistance{\mu_p^{(r_n)}}{\mu_q^{(r_n)}}$, and which implies
	\begin{equation*}
		d_{pq} \leq \frac{\delta}{(\overline{r}_n-\underline{r}_n-2\varepsilon_2)e^{-n-1}-\delta}=\frac{1}{\frac{1+\varepsilon}{\varepsilon}-1} =\varepsilon. \qedhere
	\end{equation*}
\end{proof}

Reusing a part of the proof of Theorem A2.6.III in \citet[p.~404]{daleyVereJonesVolume1}, the above proposition implies that $\boundedlyFiniteMeasures{\anyCSMS}$ is complete. Still, we would like to mention some points that could deserve a bit more detail. First, one needs to ensure that the limit of each Cauchy sequence $\collection{\mu_k^{(r_n)}}{k\in\integers}$ in Proposition \ref{prop:cauchy_implies_cauchy} is still integer-valued. This can be done by adapting the proof of Lemma 9.1.V in \citet[p.~6]{daleyVereJonesVolume2}. Second, if we denote the limit of $\collection{\mu_k^{(r_n)}}{k\in\integers}$ by $\nu_{n}$, we can show that $\nu_{m}^{(r_n)}=\nu_{n}$ when $n<m$ (i.e., the sequence of measures $\collection{\nu_n}{n\in\integers}$ is consistent) by using Theorem A2.3.II.(iv) in  \citet[p.~391]{daleyVereJonesVolume1} and the fact that $\nu_m(\partial B_{r_n})=0$.
Third, to show that
	$\mu(\cdot):=\lim_{n\rightarrow\infty} \nu_n(\cdot)$
is continuous from below, one can use the fact that $\lim_{i\rightarrow\infty}\lim_{j\rightarrow\infty} a_{ij} = \lim_{j\rightarrow\infty}\lim_{i\rightarrow\infty} a_{ij}$ for any double sequence $(a_{ij})$ that is non-decreasing in both $i$ and $j$.

\subsection{Separability}

Next, we prove that the space of boundedly finite integer-valued measures $\boundedlyFiniteMeasures{\anyCSMS}$ is separable. We wish to show that there exists a countable set in $\boundedlyFiniteMeasures{\anyCSMS}$ that can approximate well-enough any element of $\boundedlyFiniteMeasures{\anyCSMS}$. Let $\mathcal{D}_{\anyCSMS}$ be the separability set of $\anyCSMS$. It seems natural to expect that the set of totally finite (hence with a finite number of atoms) integer-valued measures with atoms only in $\mathcal{D}_{\anyCSMS}$ is a good candidate. We denote this set by $\mathcal{D}_{\mathcal{N}}$. Notice that one can define an injection between $\mathcal{D}_{\mathcal{N}}$ and the finite subsets of $\integers^2$. For example, the Dirac measure with mass $n\in\integers$ at the $m^{th}$ element of $\mathcal{D}_{\anyCSMS}$ can be represented by the set $\{(m, n)\}$. Since the finite subsets of a countable set form a countable set, we have that $\mathcal{D}_{\mathcal{N}}$ is countable. The following proposition coupled with a part of the proof of Theorem A2.6.III in \citet[p.~404]{daleyVereJonesVolume1} allows to conclude that $\mathcal{D}_{\mathcal{N}}$ is indeed a separability set for $\boundedlyFiniteMeasures{\anyCSMS}$.
\newtheorem{separability}[weakHash_implies_weak_on_spheres2]{Proposition}
\begin{separability}[] \label{prop:separability}
Let $\mu\in\boundedlyFiniteMeasures{\anyCSMS}$ and $R,\varepsilon\in\realLinePositive$.	Then, there exists $\tilde{\mu}\in\mathcal{D}_{\mathcal{N}}$ such that
\begin{equation*}
	\int_{0}^{R}e^{-r}\frac{\prohorovDistance{\mu^{(r)}}{\tilde{\mu}^{(r)}}}{1+\prohorovDistance{\mu^{(r)}}{\tilde{\mu}^{(r)}}}dr \leq \varepsilon.
\end{equation*}
\end{separability}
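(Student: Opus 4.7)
The plan is to approximate $\mu$ by perturbing each of its finitely many atoms in $B_{R+1}$ onto nearby points of the countable dense set $\mathcal{D}_{\anyCSMS}$ and then to control the resulting integral by decomposing $[0,R]$ into a part where this perturbation does not change which atoms lie inside $B_r$ (so that the Prohorov error is of order $\eta$) and an exceptional part of small Lebesgue measure on which the integrand is bounded crudely by one.

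Since $\mu$ is boundedly finite and integer-valued, $\mu(B_{R+1})<\infty$, so $\mu^{(R+1)}$ consists of finitely many atoms counted with multiplicity; write $\mu^{(R+1)}=\sum_{i=1}^{n}\delta_{x_{i}}$ for some $n\in\integers$ and points $x_{1},\ldots,x_{n}\in B_{R+1}$. Fix $\eta\in(0,1)$ to be chosen at the end, use the density of $\mathcal{D}_{\anyCSMS}$ to pick $\tilde{x}_{i}\in\mathcal{D}_{\anyCSMS}$ with $d(x_{i},\tilde{x}_{i})<\eta$ for each $i$, and set $\tilde{\mu}:=\sum_{i=1}^{n}\delta_{\tilde{x}_{i}}$, which lies in $\mathcal{D}_{\mathcal{N}}$ by construction. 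For each $r\in[0,R]$, introduce the index sets $I(r):=\{i:x_{i}\in B_{r}\}$ and $\tilde{I}(r):=\{i:\tilde{x}_{i}\in B_{r}\}$; since every atom of $\mu$ contributing to $\mu^{(r)}$ already lies in $B_{R+1}$, one has $\mu^{(r)}=\sum_{i\in I(r)}\delta_{x_{i}}$ and $\tilde{\mu}^{(r)}=\sum_{i\in\tilde{I}(r)}\delta_{\tilde{x}_{i}}$.

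By the triangle inequality, $I(r)\triangle\tilde{I}(r)\subseteq\{i:|d(x_{0},x_{i})-r|<\eta\}$, so the exceptional set $\Lambda:=\{r\in[0,R]:I(r)\neq\tilde{I}(r)\}$ is contained in $\bigcup_{i=1}^{n}(d(x_{0},x_{i})-\eta,d(x_{0},x_{i})+\eta)$ and has Lebesgue measure at most $2n\eta$. Outside $\Lambda$, the pairing $x_{i}\leftrightarrow\tilde{x}_{i}$ for $i\in I(r)=\tilde{I}(r)$, combined with the fact that $\tilde{x}_{i}\in F^{\eta}$ whenever $x_{i}\in F$, gives $\mu^{(r)}(F)\leq\tilde{\mu}^{(r)}(F^{\eta})$ and symmetrically $\tilde{\mu}^{(r)}(F)\leq\mu^{(r)}(F^{\eta})$ for every closed $F\subseteq\anyCSMS$, whence $\prohorovDistance{\mu^{(r)}}{\tilde{\mu}^{(r)}}\leq\eta$. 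Combining the two bounds,
\begin{equation*}
\int_{0}^{R}e^{-r}\frac{\prohorovDistance{\mu^{(r)}}{\tilde{\mu}^{(r)}}}{1+\prohorovDistance{\mu^{(r)}}{\tilde{\mu}^{(r)}}}\,dr\;\leq\;\eta\int_{[0,R]\setminus\Lambda}e^{-r}\,dr+\int_{\Lambda}e^{-r}\,dr\;\leq\;\eta+2n\eta=(2n+1)\eta,
\end{equation*}
so it suffices to choose $\eta<\varepsilon/(2n+1)$.

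I do not anticipate any deep obstacle: the only slightly delicate point is keeping track of how atoms may cross the sphere $\partial B_{r}$ as $r$ varies, but this is exactly what the exceptional set $\Lambda$ is designed to capture, and the $O(n\eta)$ bound on $|\Lambda|$ follows directly from the triangle inequality.
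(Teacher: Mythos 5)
Your proof is correct and follows essentially the same strategy as the paper's: perturb the finitely many atoms near $B_R$ onto nearby points of the countable dense set, then split $[0,R]$ into radii where the original and perturbed restrictions pair up atom-by-atom (giving Prohorov distance at most $\eta$) and an exceptional set of Lebesgue measure $O(n\eta)$ around the atoms' distances from $x_0$, where the integrand is bounded crudely. Your bookkeeping is marginally cleaner --- a single union bound on $|\Lambda|$ replaces the paper's four separate tolerances $\varepsilon_1,\ldots,\varepsilon_4$ and avoids the minimal-gap condition between distinct radii --- but the underlying decomposition and construction are the same.
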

\begin{proof}
	Let $\collection{u_{n}}{n\in\{1,\ldots,N\}}$ be the atoms of $\mu$ in $B_R$ where $N\in\integers$ is their total number and let $\collection{w_{n}}{n\in\{1,\ldots,N\}}$ be their corresponding weights. Let $\varepsilon_1 >0$ such that $B_{\varepsilon_1}(u_{n})\subset B_{R}$ for all $n=1,\ldots,N$. Let $0\leq r_{1}<\ldots<r_{N'}<R$ be the radii at which the atoms are located where $N'\in\integers$, $N'\leq N$ ($r_1=0$ means that $x_0\in \collection{u_{n}}{n\in\{1,\ldots,N\}}$). Define $\varepsilon_2 :=\frac{1}{2}\min_{n<N'}(r_{n+1}-r_{n})$ and $\varepsilon_3:=\varepsilon/4N'$. Define $\varepsilon_{4}:=\varepsilon/(2c-\varepsilon)$, where $c=1-e^{-R}$, and assume that $\varepsilon < 2c$ (if this is not the case, then the desired inequality already holds no matter $\tilde{\mu}$). Finally, set $\delta:=\min(\varepsilon_{1},\varepsilon_{2},\varepsilon_{3},\varepsilon_{4})$ and let $\collection{\tilde{u}_n}{n\in\{1,\ldots,N\}}$ be such that $\tilde{u}_n\in\mathcal{D}_{\anyCSMS}$, $\tilde{u}_n\in B_\delta(u_{n})$, $n=1,\ldots,N$. We will show that $\tilde{\mu}:=\sum_{n=1}^{N}w_{n}\delta_{\tilde{u}_n}$ satisfies the desired inequality.
	
	Let $n=1,\ldots,N'-1$ and $r\in (r_{n}+\delta, r_{n+1}-\delta)$. We can check that $\prohorovDistance{\mu^{(r)}}{\tilde{\mu}^{(r)}} \leq \delta$. Indeed, since $\delta\leq \varepsilon_1$ and $\delta\leq \varepsilon_2$, we have that $u_{i}\in B_r$ if and only if $\tilde{u}_i\in B_r$. As a consequence, for any closed set $A\in\anyCSMSBorel\cap B_{r}$, using the fact that $\tilde{u}_i\in B_\delta(u_{i})$, we have that
	\begin{equation*}
		\mu^{(r)}(A)=\mu(A) \leq \tilde{\mu}(A^{\delta}\cap B_r)=\tilde{\mu}^{(r)}(A^{\delta}) \quad \mbox{and} \quad \tilde{\mu}^{(r)}(A)=\tilde{\mu}(A)\leq \mu(A^{\delta}\cap B_r)=\mu^{(r)}(A^{\delta}),
	\end{equation*}
	which means that $\prohorovDistance{\mu^{(r)}}{\tilde{\mu}^{(r)}} \leq \delta$. Similarly, we also have that $\prohorovDistance{\mu^{(r)}}{\tilde{\mu}^{(r)}} \leq \delta$ for all $r\in[0,0\vee (r_{1}-\delta))$ and all $r\in(r_{N'}+\delta ,R]$. Using this bound on the Prohorov distance between the restrictions, we obtain that
	\begin{align*}
		\int_{0}^{R}\frac{e^{-r}\prohorovDistance{\mu^{(r)}}{\tilde{\mu}^{(r)}}}{1+\prohorovDistance{\mu^{(r)}}{\tilde{\mu}^{(r)}}}dr = &
		\left(\int_{0}^{0\vee (r_{1}-\delta)} + \sum_{n=1}^{N'}\int_{(r_{n}-\delta)\vee 0}^{r_{n}+\delta} + \sum_{n=1}^{N'-1}\int_{r_{n}+\delta}^{r_{n+1}-\delta} + \int_{r_{N'}+\delta}^{R} \right)
		\frac{e^{-r}\prohorovDistance{\mu^{(r)}}{\tilde{\mu}^{(r)}}}{1+\prohorovDistance{\mu^{(r)}}{\tilde{\mu}^{(r)}}}dr \\
		&\leq \int_{0}^{R}e^{-r}\frac{\delta}{1+\delta}dr + \sum_{n=1}^{N'}\int_{(r_{n}-\delta)\vee 0}^{r_{n}+\delta}e^{-r}\frac{\prohorovDistance{\mu^{(r)}}{\tilde{\mu}^{(r)}}}{1+\prohorovDistance{\mu^{(r)}}{\tilde{\mu}^{(r)}}}dr \\
		&\leq (1-e^{-R})\frac{\delta}{1+\delta} + 2\delta N'  \leq (1-e^{-R})\frac{\varepsilon_4}{1+\varepsilon_4} + 2\varepsilon_3 N'=\frac{\varepsilon}{2} + \frac{\varepsilon}{2} = \varepsilon. \qedhere
	\end{align*}
\end{proof}

\section{Characterisation of the $\sigma$-algebra $\boundedlyFiniteMeasuresBorel{\anyCSMS}$} \label{sec:sigma_algebra_characterisation}

This section proves the second part of Theorem \ref{thm:mppSpace_is_CSMS}. We show that all mappings $\Phi_A: \xi\mapsto\xi(A)$, $\xi\in\boundedlyFiniteMeasures{\anyCSMS}$, $A\in\anyCSMSBorel$, are measurable with respect to the Borel $\sigma$-algebra $\boundedlyFiniteMeasuresBorel{\anyCSMS}$ and that $\boundedlyFiniteMeasuresBorel{\anyCSMS}$ is actually generated by all these mappings. This property is very useful to check the measurability of functionals on $\boundedlyFiniteMeasures{\anyCSMS}$, like for example Hawkes functionals, as demonstrated in \citet{morariu:2017:hybrid}. Our proof is different from the original one in \citet[p.~405]{daleyVereJonesVolume1} as we identify a convenient basis for the $\weakHash$-hash topology (Proposition \ref{prop:basis_weak_hash_topology}). Note however that this last result is directly inspired by Proposition A2.5.I in \citet[p.~398]{daleyVereJonesVolume1}, where three different bases for the weak topology on $\mathcal{M}_{\anyCSMS}$ are given. Besides, our proof of Theorem \ref{thm:mppSpace_is_CSMS}.(ii) shows explicitly why the mapping $\Phi_A$ is $\boundedlyFiniteMeasuresBorel{\anyCSMS}$-measurable when $A$ is a bounded closed set.
\newtheorem{basis_weak_hash_topology}[weakHash_implies_weak_on_spheres2]{Proposition}
\begin{basis_weak_hash_topology}[] \label{prop:basis_weak_hash_topology}
Consider the family of sets
\begin{align} \label{eq:basis_weak_hash_topology}
	\{\xi\in\boundedlyFiniteMeasures{\anyCSMS}\,:\, &\xi(F_i)<\mu(F_i) + \varepsilon \mbox{ for } i=1,\ldots,m, \\
	& |\xi(\overline{B}_{r_j}) - \mu(\overline{B}_{r_j})|<\varepsilon \mbox{ and } \xi(\partial {B}_{r_j}) = 0 \mbox{ for } j=1,\ldots,n \}, \nonumber
\end{align}
where $\mu\in\boundedlyFiniteMeasures{\anyCSMS}$, $\varepsilon\in\realLinePositive$, $m,n\in\integers$, $F_{i}$, $i=1,\ldots,m$, is a bounded closed set of $\anyCSMS$ and $r_j\in\realLinePositive$, $j=1,\ldots,n$, is such that $\mu(\partial {B}_{r_j})=0$. This family forms a basis that generates the $\weakHash$-topology.	
\end{basis_weak_hash_topology}
\begin{proof}
	\textbf{Step 1.} We check that this family is a basis. Let $\mu,\mu'\in\boundedlyFiniteMeasures{\anyCSMS}$, $\varepsilon,\varepsilon'\in\realLinePositive$, let $F_{1},\ldots,F_m$ and  $F'_1,\ldots,F'_{m'}$ be bounded closed sets and let $r_1,\ldots,r_n, r'_1,\ldots, r'_{n'}>0$ such that $\mu(\partial {B}_{r_j})=0$ and $\mu'(\partial {B}_{r'_j})=0$. Consider the sets $A$ and $B$ of the form \eqref{eq:basis_weak_hash_topology} generated by these two collections, respectively, and let $\mu''\in A\cap B$. We will now find a set $C$, again of the form \eqref{eq:basis_weak_hash_topology}, such that $\mu''\in C$ and $C\subset A\cap B$.
	Set the following parameters:
	\begin{align*}
		\delta := \min_{i=1,\ldots,m}\mu(F_i)+\varepsilon -\mu''(F_i) &,\quad \delta' := \min_{i=1,\ldots,m'}\mu'(F'_i)+\varepsilon' -\mu''(F'_i), \\
		\gamma := \min_{j=1,\ldots,n} \varepsilon - |\mu''(\overline{B}_{r_j}) - \mu(\overline{B}_{r_j})| &,\quad \gamma' := \min_{j=1,\ldots,n'} \varepsilon' - |\mu''(\overline{B}_{r'_j}) - \mu'(\overline{B}_{r'_j})|;
	\end{align*}
	and let $\varepsilon'' := \min(\delta,\delta',\gamma,\gamma')$. Now, consider the set
	\begin{align*}
		C:=\{\xi\in\boundedlyFiniteMeasures{\anyCSMS}\,:\, &\xi(F_i)<\mu''(F_i) + \varepsilon'' \mbox{ for } i=1,\ldots,m,\, \xi(F'_i)<\mu''(F'_i) + \varepsilon'' \mbox{ for } i=1,\ldots,m', \\
	& |\xi(\overline{B}_{r_j}) - \mu''(\overline{B}_{r_j})|<\varepsilon'' \mbox{ and } \xi(\partial {B}_{r_j}) = 0 \mbox{ for } j=1,\ldots,n, \\
	& |\xi(\overline{B}_{r'_j}) - \mu''(\overline{B}_{r'_j})|<\varepsilon'' \mbox{ and } \xi(\partial {B}_{r'_j}) = 0 \mbox{ for } j=1,\ldots,n' \}.
	\end{align*}
	Clearly, the set $C$ is of the form \eqref{eq:basis_weak_hash_topology}. We now finally check that $C\subset A \cap B$. Let $\xi\in C$. For all $i=1,\ldots,m$, we have that
	\begin{equation*}
		\xi(F_i) < \mu''(F_i) + \varepsilon'' \leq \mu(F_i) + \varepsilon,
	\end{equation*}
	because $\varepsilon''\leq \mu(F_i)  + \varepsilon - \mu''(F_i)$. For all $j=1,\ldots,n$, we have that
	\begin{equation*}
		|\xi(\overline{B}_{r_j}) - \mu(\overline{B}_{r_j})| \leq |\xi(\overline{B}_{r_j}) - \mu''(\overline{B}_{r_j})| + |\mu''(\overline{B}_{r_j}) - \mu(\overline{B}_{r_j})| < \varepsilon,
	\end{equation*}
	because $\varepsilon'' \leq \varepsilon - |\mu''(\overline{B}_{r_j}) - \mu(\overline{B}_{r_j})|$. Thus, $\xi\in A$. A similar argument yields $\xi\in B$ and so $C\subset A\cap B$.
	
	\textbf{Step 2.} We check that every element of this basis contains an open ball. Consider first any set $A$ of the form \eqref{eq:basis_weak_hash_topology} but for which $n=1$ (only one ball). Let $\delta\in(0,1)$ such that $2\delta<\varepsilon$, $\mu(F_i^{\delta})=\mu(F_i)$ for all $i=1,\ldots,m$, and 
		\begin{equation*}
			\mu\left(\overline{\overline{B}_{r_1}^{\delta}\setminus\overline{B}_{r_1}}^{\delta}\right)=0,
		\end{equation*}
	which means that $\delta$ is chosen small enough such that there are no atoms within a distance $\delta$ of the boundary $\partial{B}_{r_1}$. Let $R\in\realLinePositive$ such that $F_i^\delta \subset B_R$ for all $i=1,\ldots,m$ and such that $r_1 + 2\delta < R$. Consider now the ball $B:=\{\xi\in\boundedlyFiniteMeasures{\anyCSMS}\,:\,\weakHashDistance{\mu}{\xi}<\gamma\}$ where $\gamma:=e^{-R}\delta/(1+\delta)$. Take any $\xi\in B$ and, by contradiction, assume that $\xi(F_i) > \mu(F_i^\delta) + \delta$ for some $i=1,\ldots,m$. Then, this implies that $\prohorovDistance{\xi^{(r)}}{\mu^{(r)}}\geq\delta$ for all $r\geq R$, which in turn implies that
	\begin{equation*}
		\weakHashDistance{\xi}{\mu}\geq \int_R^{\infty}e^{-r}\frac{\delta}{1+\delta}dr = \gamma.
	\end{equation*}
	This contradicts the fact that $\xi\in B$ and, thus, we must have that
	\begin{equation*}
		\xi(F_i) \leq \mu(F_i^\delta) + \delta = \mu(F_i) + \delta < \mu(F_i) + \varepsilon, \quad i=1,\ldots,m.
	\end{equation*}
	The same reasoning holds for the closed sets $\overline{B}_{r_1}$ and $\partial {B}_{r_1}$, finally implying that
	\begin{equation*}
		\xi(\partial{B}_{r_1}) = \mu(\partial{B}_{r_1}) = 0 \quad \mbox{and} \quad \xi(\overline{B}_{r_1}) - \mu(\overline{B}_{r_1}) \leq \delta < \varepsilon.
	\end{equation*}
	To obtain that $\xi\in A$, it remains only to show that $\mu(\overline{B}_{r_1}) - \xi(\overline{B}_{r_1}) < \varepsilon$. Using again the previous reasoning, we also have that
	\begin{equation*}
		\xi(\overline{B}_{r_1}^{\delta}) - \xi(\overline{B}_{r_1})=\xi(\overline{B}_{r_1}^{\delta}\setminus \overline{B}_{r_1}) \leq \xi\left(\overline{\overline{B}_{r_1}^{\delta}\setminus \overline{B}_{r_1}}\right) \leq \mu\left(\overline{\overline{B}_{r_1}^{\delta}\setminus \overline{B}_{r_1}}^{\delta}\right) + \delta =  \delta,
	\end{equation*}
	and also that $\mu(\overline{B}_{r_1})\leq \xi(\overline{B}_{r_1}^\delta)+\delta$.
	This implies the desired inequality
	\begin{equation*}
		\mu(\overline{B}_{r_1}) - \xi(\overline{B}_{r_1}) = \mu(\overline{B}_{r_1}) - \xi(\overline{B}_{r_1}^\delta) + \xi(\overline{B}_{r_1}^\delta) - \xi(\overline{B}_{r_1}) \leq \delta + \delta < \varepsilon,
	\end{equation*}
	and allows us to conclude that the ball $B$ is included in the neighbourhood $A$. Regarding the general case when the set $A$ is defined by multiple balls (i.e., $n>1$), simply view it as an intersection of sets $A_{j}$, where each $A_j$ is defined by one ball (i.e., $m=1$). As shown above, for each $A_j$, we can find an adequate ball with centre $\mu$ and radius $\gamma_j$. Then, the ball with radius $\gamma=\min \gamma_i$ must be included in $A$.
	
	\textbf{Step 3.} We check that every open ball contains an element of this basis. Let $\mu\in\boundedlyFiniteMeasures{\anyCSMS}$, $\varepsilon\in\realLinePositive$ and consider the ball $B:=\{\xi\in\boundedlyFiniteMeasures{\anyCSMS}\,:\,\weakHashDistance{\mu}{\xi} < \varepsilon\}$. Let $R>0$ such that $e^{-R}<\frac{1}{2}\varepsilon$. Let $\rho_1 < \ldots < \rho_N$ be all the radii in $(0,R)$ such that $\mu(\partial{B}_{\rho_j})>0$, $j=1,\ldots,N$. Set also $\rho_0:=0$ and $\rho_{N+1}:=R$. Define $\rho:=\frac{1}{2}\min_{j=1,\ldots,N+1} \rho_j - \rho_{j-1}$, let $\gamma < \varepsilon/8(N+2)$ and set $\delta:=\min(\rho,\gamma)$. Define the bounded closed sets $G_j := \overline{B}_{\rho_{j}-\delta}\setminus B_{\rho_{j-1} + \delta}$ for $j=1,\ldots,N+1$ and notice that $\mu(G_j)=0$. Also, define the radii $r_j := (\rho_{j-1}+\rho_j)/2$, $j=1,\ldots,N+1$. For all $r_j$, reusing the last part of the proof of Proposition A2.5.I in \citet[p.~399]{daleyVereJonesVolume1}, we know that we can find $\tilde{\varepsilon}_j\in(0,1)$ and a finite family of closed bounded sets $F_{1,j},\ldots,F_{m_j,j}$ such that
	\begin{align*}
		A_j := \{\xi\in\boundedlyFiniteMeasures{\anyCSMS}\,:\, &\xi(F_{i,j})<\mu(F_{i,j}) + \tilde{\varepsilon}_j \mbox{ for } i=1,\ldots,m_j,\, |\xi(\overline{B}_{r_j}) - \mu(\overline{B}_{r_j})|<\tilde{\varepsilon}_j \} \\
	& \subset \{ \xi\in\boundedlyFiniteMeasures{\anyCSMS}\,:\, \prohorovDistance{\mu^{(r_j)}}{\xi^{(r_j)}} < c \},
	\end{align*}
	where here we choose $c$ such that $(1-e^{-R})c/(1+c) <\varepsilon/4$.
	Finally, set $\tilde{\varepsilon} = \min \tilde{\varepsilon}_j$ and consider the set
	\begin{align*}
		A := \{\xi\in\boundedlyFiniteMeasures{\anyCSMS}\,:\, &\xi(F_{i,j})<\mu(F_{i,j}) + \tilde{\varepsilon} \mbox{ for } i=1,\ldots,m_j,\, \xi(G_j)<\mu(G_j) + \tilde{\varepsilon},\\
	& |\xi(\overline{B}_{r_j}) - \mu(\overline{B}_{r_j})|<\tilde{\varepsilon} \mbox{ and } \xi(\partial {B}_{r_j}) = 0,\mbox{ for } j=1,\ldots,N+1 \},
	\end{align*}
	which is of the form \eqref{eq:basis_weak_hash_topology} and is such that $A\subset A_j$, $j=1,\ldots,N+1$. For all $\xi\in A$, this implies that $\prohorovDistance{\mu^{(r_j)}}{\xi^{(r_j)}} < c$, $j=1,\ldots,N+1$. This also implies that $\xi(G_j)=0$, and thus $r\mapsto \prohorovDistance{\mu^{(r)}}{\xi^{(r)}}$ is constant on each interval $(\rho_{j-1}+\delta,\rho_j-\delta)$, $j=1,\ldots,N+1$. Noting that $r_j\in(\rho_{j-1}+\delta,\rho_j-\delta)$, it remains to check that
	\begin{align*}
		\weakHashDistance{\mu}{\xi} &< \int_0^R e^{-r}\frac{\prohorovDistance{\mu^{(r)}}{\xi^{(r)}}}{1+\prohorovDistance{\mu^{(r)}}{\xi^{(r)}}}dr + \frac{1}{2}\varepsilon \\
		&<2\delta (N+2) +  (1-e^{-R})\frac{c}{1+c} + \frac{1}{2}\varepsilon < \frac{1}{4}\varepsilon + \frac{1}{4}\varepsilon + \frac{1}{2}\varepsilon = \varepsilon.
	\end{align*}
	As a consequence, we have indeed that $A\subset B$, which concludes the proof.
\end{proof}
\begin{proof}[Proof of Theorem \ref{thm:mppSpace_is_CSMS}.(ii)]
	\textbf{Step 1.} We first show that $\Phi_A$ is $\boundedlyFiniteMeasuresBorel{\anyCSMS}$-measurable for all bounded closed set $A$. Let $n\in\integers$. We prove that $I:=\{\xi\in\boundedlyFiniteMeasures{\anyCSMS}\,:\xi(A)\leq n\}$ is an open set of $\boundedlyFiniteMeasures{\anyCSMS}$, implying that $\Phi_A$ is indeed $\boundedlyFiniteMeasuresBorel{\anyCSMS}$-measurable. 
	If $A=\emptyset$, then $I=\boundedlyFiniteMeasures{\anyCSMS}$, which is open. From now on, we assume that $A\neq\emptyset$.
	Let $\mu\in I$ ($I$ is clearly not empty).
	Let $\delta\in(0,1)$ such that $\mu(A)=\mu(A^{\delta})$ (this is always possible since $\mu$ has a finite number of atoms in $A^{\gamma}\setminus A$, with $\gamma=1$, say). Let $R>0$ such that $A^{\delta}\subset B_{R}$. Consider the open ball $J:=\{\nu\in\boundedlyFiniteMeasures{\anyCSMS}\,:\,\weakHashDistance{\mu}{\nu}<\varepsilon\}$ with $\varepsilon=e^{-R}\delta/(1+\delta)$. We then have that $J\subset I$, which implies that $I$ is open. Indeed, let $\nu\in J$ and, by contradiction, assume that $\nu(A)>\mu(A^{\delta}) + \delta$. Then, for all $r\geq R$, this implies that
	\begin{equation*}
		\nu^{(r)}(A) = \nu(A) > \mu(A^{\delta}) + \delta = \mu^{(r)}(A^{\delta}) +\delta,
	\end{equation*}
	which means that $\prohorovDistance{\mu^{(r)}}{\nu^{(r)}}\geq \delta$. Hence,
	\begin{equation*}
		\weakHashDistance{\mu}{\nu}\geq \int_{R}^{\infty}e^{-r}\frac{\delta}{1+\delta}dr = \varepsilon,
	\end{equation*}
	which contradicts the assumption that $\nu\in J$. As a consequence, we must have that $\nu(A)\leq \mu(A^{\delta}) + \delta = \mu(A) +\delta$. Since, $\nu(A)\in\integers$, $\mu(A)\in\integers$ and $\delta<1$, this implies that $\nu(A)\leq\mu(A)\leq n$, and thus $\nu\in I$.
	
	\textbf{Step 2.} Consider the class $\mathcal{C}$ of sets $\mathcal{C}:=\{A\in\anyCSMSBorel\,:\, \Phi_A \mbox{ is } \boundedlyFiniteMeasuresBorel{\anyCSMS}\mbox{-measurable}\}$. By the continuity of measures \citep[Lemma 1.14 p.~8]{kallenberg2006foundations}, we have that $\Phi_{A_n} \uparrow \Phi_{A}$ for any sequence $A_{n}\uparrow A$, and since the limit of measurable functions is measurable \citep[Lemma 1.9 p.~6]{kallenberg2006foundations}, we have that $\mathcal{C}$ is closed under increasing limits. In other words, $\mathcal{C}$ forms a monotone class.
	Moreover, consider the class $\mathcal{R}$ of sets of the form $\bigcup_{i=1}^{n} A_i\setminus B_i$ where $n\in\integers$ and $A_i,B_i\in\anyCSMSBorel$ are bounded closed sets such that $(A_i\setminus B_i) \cap (A_j \setminus B_j) = \emptyset$ as soon as $i\neq j$ (i.e., we consider finite disjoint unions of differences of bounded closed sets). One can check that $\mathcal{R}$ is stable by finite intersections and symmetric differences (perhaps the most difficult is to see that, for any bounded closed sets $A_1, A_2, B_1, B_2$, the difference $(A_1\setminus B_1)\setminus (A_2\setminus B_2)$ can be written as a disjoint union of differences of bounded closed sets). This means that $\mathcal{R}$ forms a ring. Besides, for any bounded closed sets $A,B\in\anyCSMS$, since $\xi(A)<\infty$ for all $\xi\in\boundedlyFiniteMeasures{\anyCSMS}$, we have that $\Phi_{A\setminus B} = \Phi_{A\setminus(A\cap B)} = \Phi_{A} - \Phi_{A\cap B}$. As $A\cap B$ is still a bounded closed set, by applying the first part of the proof, we obtain that $\Phi_{A\setminus B}$ is measurable. By the countable additivity of measures, this implies that $\Phi_A$ is measurable for any set $A\in\mathcal{R}$, and thus $\mathcal{R}\subset \mathcal{C}$. By the monotone class theorem \citep[p.~369]{daleyVereJonesVolume1}, we then have that $\sigma(\mathcal{R})\subset \mathcal{C}$. But $\mathcal{R}$ contains all the bounded closed balls and any open set in $\anyCSMS$ is a countable union of those since $\anyCSMS$ is separable. As a consequence, we must have that $\anyCSMSBorel = \sigma(\mathcal{R})\subset \mathcal{C}$, meaning that $\Phi_A$ is measurable for all $A\in\anyCSMSBorel$.
		
	\textbf{Step 3.} To show that $\boundedlyFiniteMeasuresBorel{\anyCSMS}$ is actually generated by all mappings $\Phi_A$, $A\in\anyCSMSBorel$, consider any $\sigma$-algebra $\mathcal{R}$ on $\boundedlyFiniteMeasures{\anyCSMS}$ such that all mappings $\Phi_A$ are measurable. Then, all the sets of the form $\eqref{eq:basis_weak_hash_topology}$ should belong to $\mathcal{R}$ and, by Proposition \ref{prop:basis_weak_hash_topology}, these sets form a basis for the $\weakHash$-topology. Since $\boundedlyFiniteMeasures{\anyCSMS}$ is separable, any open set of the $\weakHash$-topology can be represented as a countable union of these sets and, thus, $\boundedlyFiniteMeasuresBorel{\anyCSMS}\subset \mathcal{R}$.
\end{proof}

\section*{Acknowledgements}
I would like to thank Mikko S. Pakkanen, Daryl Daley, Olav Kallenberg and Nicholas Bingham for helpful discussions. I gratefully acknowledge the Mini-DTC scholarship awarded by the Mathematics Department of Imperial College London.

\bibliography{PhD_thesis_bibli.bib}
\bibliographystyle{apa}

\end{document}